\newtheorem{thm}{Theorem}[section]
\newtheorem{lem}[thm]{Lemma}
\newtheorem{remark}{Remark}
\theoremstyle{definition}
\begin{document}
\title{Automorphism groups of a class of cubic Cayley graphs on symmetric groups\footnote{This work is supported by the National Natural Science Foundation of China (Grant Nos. 11671344, 11261059 and 11531011).}}
\author{{\small Xueyi Huang, \ \ Qiongxiang Huang\footnote{
Corresponding author. E-mail: huangqx@xju.edu.cn, huangqxmath@163.com.}, \ \ Lu Lu}\\[2mm]\scriptsize
College of Mathematics and Systems Science,
\scriptsize Xinjiang University, Urumqi, Xinjiang 830046, P. R. China}
\date{}
\maketitle
{\flushleft\large\bf Abstract}
Let $S_n$ denote the symmetric group  of degree $n$ with $n\geq 3$. Set $S=\{c_n=(1\ 2\ldots \ n),c_n^{-1},(1\ 2)\}$.  Let $\Gamma_n=\mathrm{Cay}(S_n,S)$ be the Cayley graph on $S_n$ with respect to $S$. In this paper,  we show that $\Gamma_n$ ($n\geq 13$) is  a normal Cayley graph, and that the full automorphism group of $\Gamma_n$ is equal to $\mathrm{Aut}(\Gamma_n)=R(S_n)\rtimes \langle\mathrm{Inn}(\phi)\rangle\cong S_n\rtimes \mathbb{Z}_2$, where $R(S_n)$ is the right regular representation of $S_n$, $\phi=(1\ 2)(3\ n)(4\ n-1)(5\ n-2)\cdots$ $(\in S_n)$, and $\mathrm{Inn}(\phi)$ is the inner isomorphism of $S_n$ induced by $\phi$.
\begin{flushleft}
\textbf{Keywords:} Cayley graph; Normal; Automorphism group
\end{flushleft}
\textbf{AMS Classification:} 05C25

\section{Introduction}\label{s-1}
Let $G$ be a finite group, and $S$ a subset of $G$ with $e\not\in S$ ($e$ is the identity element of $G$) and  $S=S^{-1}$. The \emph{Cayley graph} on $G$ with respect to $S$, denoted by $\mathrm{Cay}(G,S)$,  is defined to be the undirected graph with vertex set $G$, and with an edge connecting $g,h\in G$ if $hg^{-1}\in S$. Denote by $\mathrm{Aut}(\mathrm{Cay}(G,S))$ and $\mathrm{Aut}(G)$ the automorphism groups of $\mathrm{Cay}(G,S)$ and $G$, respectively. The \emph{right regular representation} of the group $G$ is defined as $R(G)=\{r_g:x\mapsto xg~(\forall x\in G)\mid g\in G\}$. Clearly, $R(G)$ is a subgroup of $\mathrm{Aut}(\mathrm{Cay}(G,S))$ and so every Cayley graph is vertex-transitive. Furthermore, the group $\mathrm{Aut}(G, S)=\{\sigma\in \mathrm{Aut}( G)\mid S^\sigma=S\}$ is  a subgroup of $\mathrm{Aut}(\mathrm{Cay}(G,S))_e$, the
stabilizer of the identity vertex $e$ in $\mathrm{Aut}(\mathrm{Cay}(G,S))$, and so is also a subgroup of  $\mathrm{Aut}(\mathrm{Cay}(G,S))$. The Cayley graph $\mathrm{Cay}(G,S)$ is said to be
\emph{normal} if $R(G)$ is a normal subgroup of $\mathrm{Aut}(\mathrm{Cay}(G,S))$. Godsil in \cite{Godsil} proved that   $N_{\mathrm{Aut}(\mathrm{Cay}(G,S))}(R(G)) = R(G)\rtimes \mathrm{Aut}(G, S)$, which implies that $\mathrm{Cay}(G,S)$ is normal if and only if $\mathrm{Aut}(\mathrm{Cay}(G,S))= R(G)\rtimes \mathrm{Aut}(G, S)$.

To determine the full automorphism groups of Cayley graphs is a basic problem in algebraic graph theory. As normal Cayley graphs are just those which have the smallest possible full automorphism groups, to determine  the normality of  Cayley graphs  is an important problem in the literature \cite{Xu}. The whole information about the normality of Cayley graphs on the cyclic groups of prime order, the groups of order twice a prime, a prime-square and a product of two distinct primes were obtained by Alspach \cite{Alspach}, Du et al. \cite{Du}, Dobson et al. \cite{Dobson} and Lu et al. \cite{Lu}, respectively.  For more results regarding automorphism groups and normality of Cayley graphs, we refer the reader to  \cite{Feng2,Xu} and  references therein.

Let $S_n$ and $A_n$ denote the symmetric group  and alternating group of degree $n$, respectively.  In the past few years the problem of determining the full automorphism groups of Cayley graphs on $S_n$ and $A_n$ has received considerable attention (see, for example, \cite{Deng,Deng1,Feng1,Huang,Ganesan,Ganesan1,Korchmaros,Zhang1,Zhou}).  This is mainly due to the fact that the Cayley graphs, especially those on $S_n$ and $A_n$, are widely used as models for interconnection networks \cite{Jwo,Lakshmivarahan}. It is well known that  $S=\{c_n=(1\ 2\ldots \ n),c_n^{-1},(1\ 2)\}$ can generate $S_n$. Thus $\Gamma_n=\mathrm{Cay}(S_n,S)$ is a connected graph. The directed graph $\mathrm{Cay}(S_n,S'=\{c_n, (1\ 2)\})$ for even $n\geq 4$ has been used to provide an infinite family of non-hamiltonian directed Cayley graphs (see \cite{Godsil1}, Corollary 3.8.2). It motivates us to consider the problem of determining the full automorphism group of $\Gamma_n=\mathrm{Cay}(S_n,S)$.

In this paper, it is shown that $\Gamma_n$ ($n\geq 13$) is a normal Cayley graph, and that the full automorphism group of $\Gamma_n$ is equal to $\mathrm{Aut}(\Gamma_n)=R(S_n)\rtimes\mathrm{Aut}(S_n,S)=R(S_n)\rtimes \langle\mathrm{Inn}(\phi)\rangle\cong S_n\rtimes \mathbb{Z}_2$, where $R(S_n)$ is the right regular representation of $S_n$, $\phi=(1\ 2)(3\ n)(4\ n-1)(5\ n-2)\cdots$ $(\in S_n)$, and $\mathrm{Inn}(\phi)$ is the inner isomorphism of $S_n$ induced by $\phi$. Besides, we also provide the full automorphism group of $\Gamma_n$ for $3\leq n\leq 8$ with the help of the package ``grape'' of GAP4 \cite{GAP4}.

\section{Main Results}\label{s-2}
The main goal of this section is to determine the full automorphism group of $\Gamma_n$. First of all, we need the following crucial criterion for a Cayley graph to be normal.
\begin{lem}[\cite{Xu}]\label{lem-1}
Let $\mathrm{Cay}(G,S)$ be the Cayley graph on $G$ with respect to $S$.  Then $\mathrm{Cay}(G,S)$ is normal
if and only if $\mathrm{Aut}(\mathrm{Cay}(G,S))_e=\mathrm{Aut}(G,S)$.
\end{lem}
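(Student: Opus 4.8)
The plan is to derive both directions of the equivalence from Godsil's normalizer identity $N_A(R(G)) = R(G)\rtimes\mathrm{Aut}(G,S)$ recalled in the introduction, where I abbreviate $A=\mathrm{Aut}(\mathrm{Cay}(G,S))$. Two preliminary observations make the argument run cleanly. First, I would verify the containment $\mathrm{Aut}(G,S)\subseteq A_e$: every $\sigma\in\mathrm{Aut}(G,S)$ fixes the identity $e$ (automorphisms fix identities) and preserves adjacency, since $hg^{-1}\in S$ is equivalent to $h^\sigma(g^\sigma)^{-1}=(hg^{-1})^\sigma\in S^\sigma=S$; hence $\sigma\in A_e$. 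Second, because $R(G)$ acts regularly on the vertex set $G$, the only element of $R(G)$ fixing $e$ is the identity, so $R(G)\cap A_e=1$ and in particular $R(G)\cap\mathrm{Aut}(G,S)=1$. Consequently $|R(G)\rtimes\mathrm{Aut}(G,S)|=|G|\cdot|\mathrm{Aut}(G,S)|$.

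For the forward direction, I would assume $\mathrm{Cay}(G,S)$ is normal, that is $R(G)\trianglelefteq A$, so that $N_A(R(G))=A$ and Godsil's identity gives $A=R(G)\rtimes\mathrm{Aut}(G,S)$. To pin down $A_e$, I would take an arbitrary $\alpha\in A_e$ and write it uniquely as $\alpha=r_g\,\sigma$ with $r_g\in R(G)$ and $\sigma\in\mathrm{Aut}(G,S)$; evaluating at $e$ and using $\sigma(e)=e$ gives $e=\alpha(e)=\sigma(e)g=g$, so the $R(G)$-component $r_g$ is trivial and $\alpha=\sigma\in\mathrm{Aut}(G,S)$. Combined with the containment $\mathrm{Aut}(G,S)\subseteq A_e$ established above, this yields $A_e=\mathrm{Aut}(G,S)$.

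For the reverse direction, I would assume $A_e=\mathrm{Aut}(G,S)$ and argue by counting orders. Since $R(G)\subseteq A$ already acts transitively on the $|G|$ vertices, $A$ is transitive, and orbit--stabilizer gives $|A|=|G|\cdot|A_e|=|G|\cdot|\mathrm{Aut}(G,S)|$. On the other hand, the subgroup $N_A(R(G))=R(G)\rtimes\mathrm{Aut}(G,S)$ of $A$ has order $|G|\cdot|\mathrm{Aut}(G,S)|$ by the preliminary computation. Hence $N_A(R(G))$ and $A$ are finite groups of the same order with $N_A(R(G))\subseteq A$, which forces $N_A(R(G))=A$; this is exactly the statement $R(G)\trianglelefteq A$, i.e. $\mathrm{Cay}(G,S)$ is normal.

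The real content is carried entirely by Godsil's normalizer formula; once it is in hand, the remaining work is the bookkeeping above. The step I expect to need the most care is the order computation in the reverse direction: one must use transitivity of $R(G)$ (hence of $A$) to pass from $|A_e|$ to $|A|$, and must invoke $R(G)\cap\mathrm{Aut}(G,S)=1$ so that the semidirect-product order is genuinely $|G|\cdot|\mathrm{Aut}(G,S)|$. A slip at either point would undermine the ``equal orders imply equal groups'' conclusion that closes the argument.
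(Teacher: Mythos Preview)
Your proof is correct. Both directions are handled cleanly: the forward direction reads off $A_e$ from the decomposition $A=R(G)\rtimes\mathrm{Aut}(G,S)$, and the reverse direction is the order comparison $|A|=|G|\cdot|A_e|=|G|\cdot|\mathrm{Aut}(G,S)|=|N_A(R(G))|$ together with $N_A(R(G))\subseteq A$. The two caveats you flag (transitivity for orbit--stabilizer, and $R(G)\cap\mathrm{Aut}(G,S)=1$ for the semidirect-product order) are exactly the points where the argument could go wrong, and you address them.

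There is nothing to compare against in the paper itself: Lemma~\ref{lem-1} is stated without proof and attributed to \cite{Xu}, so the paper's ``proof'' is simply a citation. Your argument is the standard derivation of this criterion from Godsil's normalizer identity $N_A(R(G))=R(G)\rtimes\mathrm{Aut}(G,S)$, which the paper records in the introduction; this is also essentially how the result is obtained in the literature.
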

For connected Cayley graphs, the above criterion could be simplified as follows, which is well-known and  easily verified by oneself.

\begin{lem}\label{lem-2}
Let $G$ be a finite group, and let $S$ ($e\not\in S$) be a symmetric generating set of $G$. Then $\mathrm{Aut}(\mathrm{Cay}(G,S))_e=\mathrm{Aut}(G,S)$ if and only if $(st)^\sigma=s^\sigma t^\sigma$ holds for any $\sigma\in \mathrm{Aut}(\mathrm{Cay}(G,S))_e$ and  $s,t\in S$.
\end{lem}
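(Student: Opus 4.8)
The plan is to handle the two implications separately; only ``$\Leftarrow$'' requires real work. For ``$\Rightarrow$'': if $\mathrm{Aut}(\mathrm{Cay}(G,S))_e=\mathrm{Aut}(G,S)$, then every $\sigma\in\mathrm{Aut}(\mathrm{Cay}(G,S))_e$ is an automorphism of the group $G$, hence multiplicative, so in particular $(st)^\sigma=s^\sigma t^\sigma$ for all $s,t\in S$.

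For ``$\Leftarrow$'', assume the displayed identity holds for every $\sigma\in\mathrm{Aut}(\mathrm{Cay}(G,S))_e$ and all $s,t\in S$, and fix such a $\sigma$. Since $\mathrm{Aut}(G,S)\subseteq\mathrm{Aut}(\mathrm{Cay}(G,S))_e$ always, it suffices to prove $\sigma\in\mathrm{Aut}(G,S)$. First, $\sigma$ fixes $e$, hence permutes its neighbourhood $N(e)=S$, so $S^\sigma=S$; thus it remains only to show that $\sigma$ is a group automorphism. Because $\mathrm{Cay}(G,S)$ is connected, $S$ generates $G$, and a routine ``peeling'' induction on word length shows that $\sigma$ is multiplicative as soon as one knows $(sg)^\sigma=s^\sigma g^\sigma$ for all $s\in S$ and all $g\in G$; so this identity is the real goal. (We also record $e^\sigma=e$ and $(s^{-1})^\sigma=(s^\sigma)^{-1}$, the latter being the case $st=e$ of the hypothesis.)

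I would prove $(sg)^\sigma=s^\sigma g^\sigma$ by induction on the distance $\ell(g)$ from $e$ to $g$ in $\mathrm{Cay}(G,S)$. For $\ell(g)\le 1$ this is exactly the hypothesis. For $\ell(g)=n\ge 2$, choose a neighbour $g_1$ of $g$ with $\ell(g_1)=n-1$ and write $g=ug_1$ with $u\in S$ (possible since $N(g_1)=Sg_1$); the inductive hypothesis applied to $g_1$ gives $g^\sigma=u^\sigma g_1^\sigma$, and since $sg\in N(g)=Sg$ it gives $(sg)^\sigma=w\,g^\sigma$ for some $w\in S$. It then remains to identify $w$ with $s^\sigma$. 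If $su\in S\cup\{e\}$ this is immediate from the inductive hypothesis together with $(su)^\sigma=s^\sigma u^\sigma$, after rewriting $sg=(su)g_1$; and if $\ell(sg)<\ell(g)$ it follows by applying the inductive hypothesis to $sg$ with the generator $s^{-1}$. The remaining case, $su\notin S\cup\{e\}$ together with $\ell(sg)\ge\ell(g)$, is the crux: here one must pin $w$ down by comparing $(sg)^\sigma$ with the $\sigma$-images of further neighbours of $sg$, which is where the hypothesis is used in its full ``for all $\sigma$'' strength — equivalently, one applies the $2$-generator identity not to $\sigma$ alone but to the conjugated stabiliser elements $R(x)^{-1}\sigma R(x)\in\mathrm{Aut}(\mathrm{Cay}(G,S))_e$, where $R(x)\colon y\mapsto yx$. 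Once $w=s^\sigma$ is established the induction closes: $\sigma$ is then a bijective homomorphism with $S^\sigma=S$, i.e.\ $\sigma\in\mathrm{Aut}(G,S)$. The single genuinely delicate point is the identification of the coset representative $w$ in this induction step; everything else is formal bookkeeping.
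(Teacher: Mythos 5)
Your reduction to the claim $(sg)^\sigma=s^\sigma g^\sigma$ for all $s\in S$, $g\in G$, and the easy cases of your induction, are fine, but there is a genuine gap exactly at the point you yourself flag as the crux: in the case $su\notin S\cup\{e\}$ with $\ell(sg)\ge\ell(g)$ you never actually identify $w$ with $s^\sigma$. ``Comparing $(sg)^\sigma$ with the $\sigma$-images of further neighbours of $sg$'' is a statement of intent, not an argument, and this is precisely the step where the hypothesis must be invoked for stabiliser elements other than $\sigma$ itself, so it cannot be waved through. Worse, the specific elements you propose are not available: $R(x)^{-1}\sigma R(x)$ does not fix the vertex $e$ in general (it sends $e$ to $(x^{-1})^\sigma x$ or to $x^\sigma x^{-1}$, depending on composition convention), so the lemma's hypothesis does not apply to it. The maps you need are the ``twisted'' conjugates $\delta_x=R(x)\,\sigma\,R(x^\sigma)^{-1}$, i.e.\ $y^{\delta_x}=(yx)^\sigma\,(x^\sigma)^{-1}$, which are compositions of graph automorphisms and do fix $e$, hence lie in $\mathrm{Aut}(\mathrm{Cay}(G,S))_e$.

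With these the inductive step closes in one stroke and your case analysis becomes unnecessary. Keep $g=ug_1$ with $u\in S$ and $\ell(g_1)=n-1$, and set $\delta=\delta_{g_1}$. The inductive hypothesis says $(tg_1)^\sigma=t^\sigma g_1^\sigma$ for every $t\in S$, which is exactly the statement $t^{\delta}=t^\sigma$ for all $t\in S$. Since $\delta\in\mathrm{Aut}(\mathrm{Cay}(G,S))_e$, the lemma's hypothesis applies to $\delta$, and for the pair $s,u\in S$ (note that only $s,u\in S$ is needed; the product $su$ may be arbitrary) it gives $(su)^{\delta}=s^{\delta}u^{\delta}$, that is, $(sug_1)^\sigma (g_1^\sigma)^{-1}=s^\sigma u^\sigma$, hence $(sg)^\sigma=s^\sigma u^\sigma g_1^\sigma=s^\sigma (ug_1)^\sigma=s^\sigma g^\sigma$, regardless of whether $su\in S\cup\{e\}$ or how $\ell(sg)$ compares with $\ell(g)$. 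For reference, the paper gives no proof of this lemma (it is cited as well known); the argument just sketched is the standard one, and your outline follows the same strategy but with its decisive step missing and its key tool mis-stated.
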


The following lemma gives the automorphism group of $S_n$, which is useful for us to determine the full automorphism group of $\Gamma_n$.
\begin{lem}[\cite{Suzuki}, Chapter 3, Theorems 2.17--2.20]\label{lem-3}
If $n \geq 3$ and $n\neq 6$, then $\mathrm{Aut}(S_n)=\mathrm{Inn}(S_n)\cong S_n$.  If $n = 6$, then $|\mathrm{Aut}(S_n):\mathrm{Inn}(S_n)|=2$, and each element in $\mathrm{Aut}(S_n)\setminus\mathrm{Inn}(S_n)$ maps a transposition to a product of three disjoint transpositions.
\end{lem}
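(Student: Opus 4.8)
The plan is to route everything through the action of an automorphism on the conjugacy class of transpositions, and to isolate degree $6$ as the only exception. Let $\alpha\in\mathrm{Aut}(S_n)$. Since $\alpha$ permutes conjugacy classes preserving cardinality and element order, and the involution classes of $S_n$ are precisely the classes $T_k$ of products of $k$ pairwise disjoint transpositions ($1\le k\le\lfloor n/2\rfloor$), the transposition class is mapped by $\alpha$ onto some $T_k$. Comparing the orders of the (isomorphic) centralizers $C_{S_n}((1\,2))$ and $C_{S_n}(\alpha((1\,2)))$ gives $2\,(n-2)!=2^k k!\,(n-2k)!$, that is
\[
(n-2)(n-3)\cdots(n-2k+1)=2^{k-1}k!,
\]
a product of $2k-2$ consecutive positive integers. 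For $k=1$ this is an identity; for $k\ge2$ a short check (the right side is dwarfed by the least possible left side $(2k-2)!$ once $k\ge4$, and the cases $k=2,3$ are done by hand) shows the unique solution is $(n,k)=(6,3)$, both sides being $24$. Hence $\alpha$ sends transpositions to transpositions whenever $n\ne6$, while for $n=6$ it sends them either to transpositions or to products of three disjoint transpositions.

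Next I would show that if $\alpha$ fixes the transposition class setwise, then $\alpha$ is inner. Work with the Coxeter generators $t_i=(i\ i+1)$, $1\le i\le n-1$, which satisfy $(t_it_{i+1})^3=e$ and $t_it_j=t_jt_i$ for $|i-j|\ge2$. Then the transpositions $u_i:=\alpha(t_i)$ inherit the incidence pattern: $u_i$ and $u_{i+1}$ meet in exactly one point (their product has order $3$), while $u_i$ and $u_j$ are disjoint for $|i-j|\ge2$. An induction on $i$ shows that $u_1,\dots,u_{n-1}$ form a path: there is a relabelling $a_1,\dots,a_n$ of $\{1,\dots,n\}$ with $u_i=(a_i\ a_{i+1})$, since at each step $u_i$ must meet $u_{i-1}=(a_{i-1}\ a_i)$ yet avoid $u_{i-2}=(a_{i-2}\ a_{i-1})$, which forces $u_i$ to attach a fresh point at the free end $a_i$. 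Taking $g\in S_n$ to be the permutation $i\mapsto a_i$, we get $g\,t_i\,g^{-1}=(a_i\ a_{i+1})=\alpha(t_i)$ for all $i$, so $\alpha=\mathrm{Inn}(g)$ because the $t_i$ generate $S_n$. Combined with the first paragraph, this yields $\mathrm{Aut}(S_n)=\mathrm{Inn}(S_n)$ for $n\ne6$, and $\mathrm{Inn}(S_n)\cong S_n$ since $Z(S_n)=1$.

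For $n=6$ the centralizer count leaves exactly two possibilities for $\alpha$; and if $\alpha,\beta$ both send transpositions to products of three disjoint transpositions, then $\alpha\beta^{-1}$ fixes the transposition class and is therefore inner by the previous paragraph. Hence $|\mathrm{Aut}(S_6):\mathrm{Inn}(S_6)|\le2$, and it remains to exhibit a genuinely outer automorphism. For this one invokes the exceptional transitive copy of $S_5$ in $S_6$ (for instance $S_5\cong\mathrm{PGL}_2(5)$ acting on the six points of $\mathbb{P}^1(\mathbb{F}_5)$): the action of $S_6$ on the six cosets of such a subgroup is an automorphism of $S_6$ whose induced point stabilizer is transitive, so it cannot be conjugation. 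By the dichotomy above, this automorphism — and hence every element of $\mathrm{Aut}(S_6)\setminus\mathrm{Inn}(S_6)$ — carries transpositions to products of three disjoint transpositions, which is exactly the asserted statement.

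I expect the induction in the second paragraph to be routine. The one genuinely non-formal point, and the main obstacle, is the construction of the outer automorphism of $S_6$: this reflects a property special to degree $6$ that cannot be extracted from the uniform argument used for $n\ne6$, and must be produced by an explicit (or cited) model; once it is in hand, all remaining assertions follow from the counting already carried out.
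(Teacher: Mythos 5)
The paper offers no proof of this lemma to compare against: it is quoted as a classical result with a citation to Suzuki (Chapter 3, Theorems 2.17--2.20). Your argument is a correct, essentially self-contained reconstruction of the standard proof of that classical theorem, and it follows the usual route: an automorphism sends the transposition class to some involution class $T_k$, the centralizer count $2(n-2)!=2^k k!\,(n-2k)!$ forces $k=1$ except for the single solution $(n,k)=(6,3)$, a class-preserving automorphism is shown to be inner by tracking the images of the Coxeter generators $(i\ i+1)$, and the exceptional case $n=6$ is settled by the coset argument bounding the index by $2$ together with an explicitly produced outer automorphism. Two points deserve tightening if you want the write-up airtight. First, in the path-building induction the claim that $u_i$ attaches a \emph{fresh} point needs disjointness of $u_i$ from all $u_j$ with $j\le i-2$ (which the commuting relations do give you), not merely from $u_{i-2}$ as your parenthetical suggests; otherwise $u_i$ could close up onto an earlier vertex. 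Second, the existence step for $n=6$ rests on two quoted facts: that $S_6$ contains a transitive subgroup isomorphic to $S_5$ (via $\mathrm{PGL}_2(5)$ acting on $\mathbb{P}^1(\mathbb{F}_5)$, or equivalently the action of $S_5$ on the six cosets of the normalizer of a Sylow $5$-subgroup), and that the action of $S_6$ on the six cosets of this subgroup is faithful (the kernel is a normal subgroup contained in a subgroup of order $120$, hence trivial since the only normal subgroups of $S_6$ are $1$, $A_6$, $S_6$). You correctly flag the first of these as the one genuinely non-formal ingredient; with it in hand, your dichotomy argument does yield both the index $2$ statement and the fact that every outer automorphism of $S_6$ sends transpositions to triple transpositions, which is exactly what the lemma asserts and all that the paper later uses (in Lemma 2.7).
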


The following two lemmas provide a main tool  for us to prove the normality of $\Gamma_n$.
\begin{lem}\label{lem-4}
Let $S=\{c_n=(1\ 2\ldots\ n),c_n^{-1},(1\ 2)\}$ and $\Gamma_n=\mathrm{Cay}(S_n,S)$ ($n\geq 13$). Then there is an unique $12$-cycle in $\Gamma_n$ passing through $e$, $c_n$ and $c_n^{-1}$ which is shown in (\ref{12-cycle-6}).
\end{lem}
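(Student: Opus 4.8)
The plan is to first display the claimed $12$-cycle -- this is (\ref{12-cycle-6}) -- and verify by inspection that it really is a $12$-cycle through $e$, $c_n$ and $c_n^{-1}$: that its twelve vertices are pairwise distinct, that consecutive vertices differ by left multiplication by an element of $S$, and that its two end vertices are adjacent. For $n\ge 13$ the only thing that needs checking here is that the powers $c_n^{k}$ appearing in it (all with $|k|$ small) are pairwise distinct, so this step is routine. The substance of the lemma is the uniqueness assertion.

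For uniqueness, the first point is that any $12$-cycle through $e$ splits as the concatenation of two paths of length $6$ from $e$ to the vertex antipodal to $e$ on it; hence every such cycle lies inside the ball $B_{6}(e)$ of radius $6$ about $e$ in $\Gamma_n$. Since $n\ge 13>12$ there is no wrap-around among short powers of $c_n$ (that is, $c_n^{i}\ne c_n^{j}$, $c_n^{i}t\ne c_n^{j}t$, and so on whenever $0<|i-j|\le 12$), and -- apart from $t^{2}=e$ -- every nontrivial relation of length $<12$ in $\langle c_n,t\rangle=S_n$ can be excluded by a short look at the cycle structures of $c_n$ and $t$; so the girth of $\Gamma_n$ is $12$, every reduced closed walk of length $12$ based at $e$ is a genuine $12$-cycle, and the induced subgraph on $B_{6}(e)$ -- hence the list of all $12$-cycles through $e$ -- is uniform in $n$ for $n\ge 13$. (This is precisely where the hypothesis $n\ge 13$ is needed: already for $n=12$ the cycle $e,c_n,c_n^{2},\dots,c_n^{11}=c_n^{-1},e$ would be a second $12$-cycle through $e,c_n,c_n^{-1}$.) It is also convenient to note that $\mathrm{Inn}(\phi)$, with $\phi=(1\ 2)(3\ n)(4\ n-1)\cdots$, lies in $\mathrm{Aut}(\Gamma_n)_{e}$, fixes $t$, and interchanges $c_n$ and $c_n^{-1}$.

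The second step is to show that any $12$-cycle $C$ through $e,c_n,c_n^{-1}$ must contain the path $c_n-e-c_n^{-1}$. A $12$-cycle through $e$ uses exactly two of the three edges $\{e,c_n\},\{e,c_n^{-1}\},\{e,t\}$ at $e$, and since $e$ has only three neighbours at least one of $c_n,c_n^{-1}$ is a neighbour of $e$ on $C$. Suppose $C$ does not use both $\{e,c_n\}$ and $\{e,c_n^{-1}\}$; then it uses $\{e,t\}$ together with, say, $\{e,c_n\}$ (the case $\{e,c_n^{-1}\}$ following by applying $\mathrm{Inn}(\phi)$). Write $C$ as $p_{0},p_{1},\dots,p_{11}$ (indices mod $12$) with $p_{0}=e$, $p_{1}=c_n$, $p_{11}=t$. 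Then $c_n^{-1}=p_{j}$ for some $2\le j\le 10$, so $\{e,c_n^{-1}\}=\{p_{0},p_{j}\}$ is a chord of $C$ that is not one of its edges; adjoining this chord splits $C$ into two cycles of lengths $j+1$ and $13-j$, one of which has length at most $7<12$, contradicting the girth. Hence $C$ contains $c_n-e-c_n^{-1}$. (By the same chord argument applied to the edge $\{e,t\}$, such a $C$ moreover avoids the vertex $t$ altogether, and indeed avoids most of $B_{2}(e)$.)

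It then remains to prove that $\Gamma_n-e$ contains a unique path $P$ of length $10$ from $c_n$ to $c_n^{-1}$; deleting $e$ from $C$ recovers $P$, and adjoining $e$ to $P$ recovers $C$. Once uniqueness is known, $P$ must coincide with the reverse of its image under $\mathrm{Inn}(\phi)$, so its two halves are mirror images and its middle vertex centralizes $\phi$, which roughly halves the casework. I would then run a breadth-first search from $c_n$: its only admissible neighbour is $c_n^{2}$ or $tc_n$ (the third neighbour, $e$, being forbidden), at every later vertex there are at most two admissible continuations, and repeated use of the chord argument above together with the $n\ge 13$ genericity of $B_{6}(e)$ discards all but one branch, which reaches $c_n^{-1}$ in exactly ten steps -- the path underlying (\ref{12-cycle-6}). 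I expect the main obstacle to be the bookkeeping in these two enumerations (the one in the previous paragraph and this one): the chord argument and the uniformity of $B_{6}(e)$ make them finite and essentially mechanical, but organising them so that nothing secretly depends on $n$ requires care.
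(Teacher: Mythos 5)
Your proposal is a plan rather than a proof, and the missing piece is precisely the substance of the lemma. After your reduction, everything hinges on the claim that $\Gamma_n-e$ contains a unique path of length $10$ from $c_n$ to $c_n^{-1}$, but this is never established: you only announce that a breadth-first search with at most two continuations per step, pruned by ``the chord argument and the $n\ge 13$ genericity of $B_6(e)$'', discards all but the branch underlying (\ref{12-cycle-6}). A depth-$10$ search is a priori on the order of $2^{10}$ branches, and none of the pruning is actually carried out. Moreover, the two facts the pruning rests on -- that $\Gamma_n$ has girth $12$ and that the induced subgraph on $B_6(e)$ (hence the list of $12$-cycles through $e$) is independent of $n$ for $n\ge 13$ -- are themselves only asserted, with a gesture at ``cycle structures''; they require exactly the support/no-wrap-around computation that constitutes the paper's proof. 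The paper proceeds by writing any such cycle as a relation $c_n^{i_k}(1\ 2)c_n^{i_{k-1}}(1\ 2)\cdots(1\ 2)c_n^{i_1}=e$ of total length $12$, forcing $i_1+\cdots+i_k=0$ by comparing supports (this is where $n\ge 13$ is used), forcing $k=3$ or $5$ by parity, and then eliminating the finitely many exponent patterns one by one; that finite case analysis is the content of the lemma and has no counterpart in your write-up. A further local flaw: your remark that ``once uniqueness is known, $P$ must coincide with the reverse of its image under $\mathrm{Inn}(\phi)$ \dots which roughly halves the casework'' is circular as stated, since you cannot invoke a consequence of uniqueness while proving uniqueness (a legitimate version would pair putative paths under the symmetry, but that still leaves the half-enumeration to be done).

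On the positive side, your chord-plus-girth argument that any $12$-cycle through $e$, $c_n$, $c_n^{-1}$ must actually use the two edges $\{e,c_n\}$ and $\{e,c_n^{-1}\}$ addresses a point the paper passes over silently (its proof starts from the parametrization $C=(e,c_n,\dots,c_n^{-1},e)$ without justifying that $e$ is adjacent on the cycle to both $c_n$ and $c_n^{-1}$), and your observation about $n=12$ matches Remark 1. But as submitted, the girth claim, the uniformity of $B_6(e)$, and above all the uniqueness enumeration are gaps that would each require an argument of the same kind and length as the paper's own.
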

\begin{proof}
Assume that $C=(e,c_n,s_1c_n,s_2s_1c_n,\ldots,s_{10}\cdots s_2s_1c_n=c_n^{-1},c_{n}s_{10}$ $\cdots s_2s_1c_n=e)$ is an arbitrary $12$-cycle in $\Gamma_n$ passing through $e$, $c_n$ and $c_n^{-1}$, where $s_i\in S$ for $1\leq i\leq 10$. Since $C$ is  a cycle, we have $s_1\neq c_n^{-1}$, $s_i\neq s_{i-1}^{-1}$ for $2\leq i\leq 10$ and $s_{10}\neq c_n^{-1}$. Thus there exists a positive integer $k$ such that $C$  is determined by the  equation
\begin{equation}\label{12-cycle-1}
e=c_{n}s_{10}\cdots s_2s_1c_n=c_n^{i_k}(1\ 2)c_n^{i_{k-1}}(1\ 2)\cdots c_n^{i_3}(1\ 2)c_n^{i_2}(1\ 2)c_n^{i_1},
\end{equation}
where $i_1\ge 1$, $i_k\ge 1$, $|i_l|\ge1$ for $l=2,3,\ldots,k-1$ and $i_1+|i_2|+\cdots+|i_{k-1}|+i_k+k-1=12$.
Clearly, we have $k\le 6$. If $k=1$, then (\ref{12-cycle-1}) is equivalent to $e=c_n^{12}$, which is impossible because $n\geq 13$. Therefore, we have $2\leq k\leq 6$. Let $u_0=(1\ 2)$ and  $u_l=c_n^{-i_1-i_2-\cdots-i_{1-1}-i_l}(1\ 2)c_n^{i_1+i_2+\cdots+i_{l-1}+i_l}$ for $l=1,2,\ldots,k-1$. Then  $u_l=c_{n}^{-i_l}u_{l-1}c_{n}^{i_l}$ for $l=1,\ldots,k-1$, and so (\ref{12-cycle-1}) becomes
\begin{equation}\label{12-cycle-2}
e=c_n^{i_1+i_2+\cdots+i_{k-1}+i_k}u_{k-1}\cdots u_3u_2u_1,
\end{equation}
or equivalently,
\begin{equation}\label{12-cycle-3}
c_n^{-i_1-i_2-\cdots-i_{k-1}-i_k}=u_{k-1}\cdots u_3u_2u_1.
\end{equation}

Note that $i_1+i_2+\cdots+i_{k-1}+i_k\in[-\sum_{l=1}^k|i_l|,\sum_{l=1}^k|i_l|]\subseteq[-12,12]\subset[-n,n]$ because $\sum_{l=1}^k|i_l|\leq 12$ and $n\geq 13$. Therefore, if $i_1+i_2+\cdots+i_{k-1}+i_k\not=0$, then $i_1+i_2+\cdots+i_{k-1}+i_k\not\equiv0$ $(\mathrm{mod}~n)$, and so $|\mathrm{supp}(c_n^{-i_1-i_2-\cdots-i_{k-1}-i_k})|=|\mathrm{supp}(c_n^{i_1+i_2+\cdots+i_{k-1}+i_k})|=n\geq 13$. However, $|\mathrm{supp}(u_{k-1}\cdots u_3u_2u_1)|\leq 2(k-1)\leq 10$, which is a contradiction according to (\ref{12-cycle-3}). Thus $i_1+i_2+\cdots+i_{k-1}+i_k=0$. From (\ref{12-cycle-2}) we have
\begin{equation}\label{12-cycle-4}
u_{k-1}\cdots u_3u_2u_1=e.
\end{equation}
Clearly, $k=3$ or $5$ since $2\leq   k\leq 6$ and $e$ is an even permutation.

If $k=3$, then $i_1+|i_2|+i_3=10$ and $i_1+i_2+i_3=0$, implying that $i_2=-5$, and $\{i_1,i_3\}=\{2,3\}$ or $\{1,4\}$ because $i_1,i_3\geq 1$.  In the former case, if $i_1=2$ then $u_1=c_{n}^{-2}(1\ 2)c_{n}^{2}=(3\ 4)$ and $u_2=c_{n}^{-i_2}u_1c_{n}^{i_2} =c_{n}^{5}(3\ 4)c_{n}^{-5}=(n-2\ n-1)$, which gives that $u_2u_1=(n-2\ n-1)(3\ 4)\not=e$, contrary to (\ref{12-cycle-4}); if $i_1=3$, similarly, we have  $u_2u_1=(n-1\ n)(4\ 5)\not=e$, also contrary to (\ref{12-cycle-4}). In the later case, one can also deduce a contradiction in the same way.

If  $k=5$, then $i_1+|i_2|+|i_3|+|i_{4}|+i_5=8$ and $i_1+i_2+i_3+i_{4}+i_5=0$. Since $i_1,i_5\geq 1$, it is easy to see that there are seven types of solutions satisfying these conditions:
\begin{equation*}
\left\{\begin{array}{rl}
\mbox{I-type:}&\{i_1,i_5\}=\{1,1\}, ~\{i_2,i_3,i_4\}=\{-4,1,1\};\\
\mbox{II-type:}&\{i_1,i_5\}=\{1,1\}, ~\{i_2,i_3,i_4\}=\{-3,-1,2\};\\
\mbox{III-type:}&\{i_1,i_5\}=\{1,1\}, ~\{i_2,i_3,i_4\}=\{-2,-2,2\};\\
\mbox{IV-type:}&\{i_1,i_5\}=\{2,1\},  ~\{i_2,i_3,i_4\}=\{-3,-1,1\};\\
\mbox{V-type:}&\{i_1,i_5\}=\{2,1\},  ~\{i_2,i_3,i_4\}=\{-2,-2,1\};\\
\mbox{VI-type:}&\{i_1,i_5\}=\{2,2\},  ~\{i_2,i_3,i_4\}=\{-2,-1,-1\};\\
\mbox{VII-type:}&\{i_1,i_5\}=\{3,1\},  ~\{i_2,i_3,i_4\}=\{-2,-1,-1\}.\\
\end{array}\right.
\end{equation*}

For I-type and II-type, since $i_1=i_5=1$, we have $u_1=c_{n}^{-1}(1\ 2)c_n=(2\ 3)$ and $u_4=c_n^{-i_1-i_2-i_3-i_4}(1\ 2)$ $c_n^{i_1+i_2+i_3+i_4}=c_n^{i_5}(1\ 2)c_n^{-i_5}=c_n(1\ 2)c_n^{-1}=(n\ 1)$, which gives that $\{u_2, u_3\}=\{(2\ 3),(n\ 1)\}$ due to $u_4u_3u_2u_1=e$. If $u_2=(2\ 3)$,  from $u_2=c_n^{-i_2}u_1c_n^{i_2}$ we deduce that $i_2=0$, a  contradiction. If  $u_2=(n\ 1)$, similarly, one can deduce that $i_2=-2$, which is impossible because $\{i_1,i_2,i_3\}=\{-4,1,1\}$  or $\{-3,-1,2\}$.

For III-type, we also have $u_1=(2\ 3)$, $u_4=(n\ 1)$ and $\{u_2, u_3\}=\{(2\ 3),(n\ 1)\}$. If $u_2=(2\ 3)$,  from $u_2=c_n^{-i_2}u_1c_n^{i_2}$ we get $i_2=0$, a  contradiction. If  $u_2=(n\ 1)$, then $u_3=(2,3)$. This implies that $i_2=-2$, $i_3=2$ and $i_4=-2$, and so (\ref{12-cycle-1}) becomes
\begin{equation}\label{12-cycle-5}
c_n(1\ 2)c_n^{-2}(1\ 2)c_n^2(1\ 2)c_n^{-2}(1\ 2)c_n=e,
\end{equation}
which holds naturally because $(1\ 2)c_n^{-2}(1\ 2)c_n^2=(1\ 2)(3\ 4)$ is of order $2$. This leads to a possible $12$-cycle in $\Gamma_n$ passing through $e$, $c_{n}$ and $c_{n}^{-1}$, namely
\begin{equation}\label{12-cycle-6}
\begin{array}{lll}
C&=&(e,c_n,(1\ 2)c_n,c_n^{-1}(1\ 2)c_n,c_n^{-2}(1\ 2)c_n,(1\ 2)c_n^{-2}(1\ 2)c_n,c_n(1\ 2)c_n^{-2}(1\ 2)c_n,\\
&&c_n^2(1\ 2)c_n^{-2}(1\ 2)c_n,(1\ 2)c_n^2(1\ 2)c_n^{-2}(1\ 2)c_n,c_n^{-1}(1\ 2)c_n^2(1\ 2)c_n^{-2}(1\ 2)c_n,\\
&&c_n^{-2}(1\ 2)c_n^2(1\ 2)c_n^{-2}(1\ 2)c_n=(1\ 2)c_n^{-1},(1\ 2)c_n^{-2}(1\ 2)c_n^2(1\ 2)c_n^{-2}(1\ 2)c_n=c_n^{-1},\\
&&c_n(1\ 2)c_n^{-2}(1\ 2)c_n^2(1\ 2)c_n^{-2}(1\ 2)c_n=e).
\end{array}
\end{equation}
It is easy to verify that  $C$ is exactly a $12$-cycle.

For IV-type and V-type,  we have $u_1=(3\ 4)$ and $u_4=(n\ 1)$ or $u_1=(2\ 3)$ and $u_4=(n-1\ n)$. In the former case, we get $\{u_2,u_3\}=\{(3\ 4), (n\ 1)\}$. If $u_2=(3\ 4)$, then  $i_2=0$, a contradiction. If $u_2=(n\ 1)$, then $u_3=(3\ 4)$, implying that $i_3=3$, which is impossible. In the later case, one can deduce a contradiction in the same way.

For VI-type,  we have $u_1=(3\ 4)$ and $u_4=(n-1\ n)$. Then $\{u_2,u_3\}=\{(3\ 4), (n-1\ n)\}$. If $u_2=(3\ 4)$, then  $i_2=0$, a contradiction. If $u_2=(n-1\ n)$, then $i_2=-4$, which is impossible.

For VII-type,  we have $u_1=(4\ 5)$ and $u_4=(n\ 1)$ or $u_1=(2\ 3)$ and $u_4=(n-2\ n-1)$. In the former case, we obtain $\{u_2,u_3\}=\{(4\ 5), (n\ 1)\}$. If $u_2=(4\ 5)$, then  $i_2=0$, a contradiction. If $u_2=(n\ 1)$, then $i_2=-4$, which is impossible. In the later case, similarly, one can also deduce a contradiction.

Summarizing the above discussions, we conclude that there is an unique $12$-cycle, which is shown in (\ref{12-cycle-6}), in $\Gamma_n$ passing through $e$, $c_{n}$ and $c_{n}^{-1}$.

We complete the proof.
\end{proof}

\begin{remark}\label{rem-1}
\emph{If $n=12$, there is another $12$-cycle in $\Gamma_n$ passing through $e$, $c_{n}$ and $c_{n}^{-1}$ due to $c_n^{12}=e$. Thus the condition $n\ge 13$ in Lemma \ref{lem-4} is necessary.}
\end{remark}

\begin{lem}\label{lem-5}
Let $S=\{c_n=(1\ 2\ldots n),c_n^{-1},(1\ 2)\}$ and $\Gamma_n=\mathrm{Cay}(S_n,S)$ ($n\geq 13$). Then there are exactly two $12$-cycles in $\Gamma_n$ passing through $e$, $(1\ 2)$ and $c_n$ (resp. $e$, $(1\ 2)$ and $c_n^{-1}$), which are shown in (\ref{12-cycle-12}) and (\ref{12-cycle-14}) (resp. (\ref{12-cycle-15}) and (\ref{12-cycle-16})).
\end{lem}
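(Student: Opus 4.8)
The plan is to mimic the proof of Lemma \ref{lem-4}. Let $C$ be a $12$-cycle of $\Gamma_n$ passing through $e$, $(1\ 2)$ and $c_n$. As in Lemma \ref{lem-4}, I first argue that $e$ lies between $(1\ 2)$ and $c_n$ on $C$: otherwise the two neighbours of $e$ on $C$ form one of the pairs $\{c_n,c_n^{-1}\}$ or $\{(1\ 2),c_n^{-1}\}$. In the first case $C$ would pass through $e$, $c_n$, $c_n^{-1}$, hence be the cycle (\ref{12-cycle-6}) by Lemma \ref{lem-4} — but that cycle does not contain the vertex $(1\ 2)$, a contradiction. In the second case $C$ contains the $2$-path $(1\ 2)$–$e$–$c_n^{-1}$, and one checks (by the symmetric analysis below for the ``resp.'' statement) that neither of the two such $12$-cycles passes through $c_n$. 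Hence we may write
\[
C=(e,\,c_n,\,t_1c_n,\,t_2t_1c_n,\,\ldots,\,t_{10}\cdots t_1c_n=(1\ 2),\,e),
\]
with $t_i\in S$, $t_1\neq c_n^{-1}$, $t_{10}\neq(1\ 2)$ and $t_i\neq t_{i-1}^{-1}$ for $2\le i\le 10$, so that $C$ is encoded by the word identity $(1\ 2)\,t_{10}t_9\cdots t_1\,c_n=e$.

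Collecting the maximal runs of powers of $c_n$ between consecutive transpositions $(1\ 2)$, this identity becomes
\[
(1\ 2)\,c_n^{j_m}(1\ 2)c_n^{j_{m-1}}(1\ 2)\cdots(1\ 2)c_n^{j_1}=e,
\]
with $j_1\ge 1$, $j_i\neq 0$ for all $i$, and $m+\sum_{i=1}^{m}|j_i|=12$, whence $m\le 6$. Exactly as in Lemma \ref{lem-4}, multiplying on the left by $(1\ 2)$ and conjugating the remaining transpositions to the right past the powers of $c_n$ rewrites this as $c_n^{-(j_1+\cdots+j_m)}=(1\ 2)\,w_1w_2\cdots w_{m-1}$, where each $w_l=c_n^{-(j_1+\cdots+j_l)}(1\ 2)c_n^{\,j_1+\cdots+j_l}$ is a transposition; thus the right side is a product of $m$ transpositions, of support $\le 2m\le 12$. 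Since $|j_1+\cdots+j_m|\le 12-m\le 11<n$, a nonzero value of $j_1+\cdots+j_m$ would make the left side an $n$-cycle of support $n\ge 13$, a contradiction. Hence $j_1+\cdots+j_m=0$, and then $(1\ 2)w_1\cdots w_{m-1}=e$ is a product of $m$ transpositions equal to the identity, so $m$ is even: $m\in\{2,4,6\}$.

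It then remains, for each $m\in\{2,4,6\}$, to run through the admissible tuples $(j_1,\ldots,j_m)$ — nonzero integers with $j_1\ge 1$, $\sum j_i=0$ and $\sum|j_i|=12-m$, the analogue of the type I--VII list in Lemma \ref{lem-4} — and for each of them to test whether the relation $(1\ 2)c_n^{j_m}(1\ 2)\cdots c_n^{j_1}=e$ actually holds and, when it does, whether the resulting closed walk is a simple $12$-cycle. As in Lemma \ref{lem-4}, most tuples are discarded because some $j_i$ is forced to a value incompatible with the tuple (typically $0$), or because two of the twelve vertices coincide; the survivors are exactly the $12$-cycles (\ref{12-cycle-12}) and (\ref{12-cycle-14}), each resting — like (\ref{12-cycle-5}) — on the fact that a certain sub-word of $S$ is an involution. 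Finally, the ``resp.'' assertion needs no new work: the inner automorphism $\mathrm{Inn}(\phi)$ with $\phi=(1\ 2)(3\ n)(4\ n-1)\cdots$ is a graph automorphism of $\Gamma_n$ (it fixes $S$ setwise, sending $(1\ 2)\mapsto(1\ 2)$ and interchanging $c_n$ and $c_n^{-1}$) and fixes $e$, so it carries the two $12$-cycles through $e$, $(1\ 2)$, $c_n$ bijectively onto those through $e$, $(1\ 2)$, $c_n^{-1}$, namely (\ref{12-cycle-15}) and (\ref{12-cycle-16}).

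The step I expect to be the main obstacle is the final enumeration. Compared with Lemma \ref{lem-4} there is an extra value of $m$, hence more type-cases, and for each surviving relation one must carry out the genuinely delicate check that the twelve vertices are pairwise distinct (not merely that the group relation holds); pinning down precisely the two tuples that pass every test is where the real effort goes.
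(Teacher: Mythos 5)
Your framework is the same as the paper's: parametrize the cycle by a word in $S$, collect the runs of $c_n^{\pm1}$ between occurrences of $(1\ 2)$, conjugate to turn the transpositions into $w_l$'s, use the support bound $2m\le 12<n$ to force the exponent sum to vanish, and use parity to force $m\in\{2,4,6\}$ (your bookkeeping is the mirror image of the paper's $i_k\le-1$ convention, which is harmless). Two additions are genuinely welcome: the explicit discussion of which two of $e$'s three neighbours are its neighbours \emph{on the cycle} (a point the paper silently assumes when it writes the cycle as $e,(1\ 2),\ldots,c_n,e$), and the use of the graph automorphism $\mathrm{Inn}(\phi)$ to transfer the count from $\{e,(1\ 2),c_n\}$ to $\{e,(1\ 2),c_n^{-1}\}$ instead of repeating the analysis, as the paper does with its ``similarly''. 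For the latter, be careful to apply the symmetry to what you actually prove directly --- ``the only $12$-cycles containing the $2$-path $(1\ 2)$--$e$--$c_n$ are (\ref{12-cycle-12}) and (\ref{12-cycle-14})'' --- and only then use its image to dispose of your second positional case; stated that way the argument is not circular, but as written it leans on ``the symmetric analysis below'' which is itself deduced from the result being proved.

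The genuine gap is that the decisive step is announced rather than carried out. After the reduction, the entire content of the lemma is the finite enumeration: for $m=2$ the tuple is forced to $(5,-5)$ and fails; for $m=4$ one must list the admissible tuples (the analogue of the paper's types I--IV), compute the forced conjugated transpositions in each case to show that every tuple except $(2,-2,2,-2)$ leads to an impossible exponent, and check that $(2,-2,2,-2)$ yields the relation (\ref{12-cycle-11}) and the genuine $12$-cycle (\ref{12-cycle-12}); for $m=6$ one must show the only solution is $(1,-1,1,-1,1,-1)$, yielding (\ref{12-cycle-13}) and the cycle (\ref{12-cycle-14}), and in both surviving cases verify that the twelve vertices are pairwise distinct. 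You assert the outcome (``the survivors are exactly (\ref{12-cycle-12}) and (\ref{12-cycle-14})'') and yourself flag this enumeration as the expected main obstacle. Since that is precisely where the paper's proof does all of its work, nothing in your proposal would fail, but as it stands it is a correct plan rather than a proof: the claim that exactly two tuples survive is exactly what remains to be proved.
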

\begin{proof}
Assume that $C=(e,(1\ 2),s_1(1\ 2),s_2s_1(1\ 2),\ldots,s_{10}\cdots s_2s_1(1\ 2)=c_n,c_{n}^{-1}s_{10}\cdots $ $s_2s_1(1\ 2)=e)$ is an arbitrary $12$-cycle in $\Gamma_n$ passing through $e$, $(1\ 2)$ and $c_n$, where $s_i\in S$ for $1\leq i\leq 10$. Since $C$ is  a cycle, we have $s_1\neq (1\ 2)$, $s_i\neq s_{i-1}^{-1}$ for $2\leq i\leq 10$ and $s_{10}\neq c_n$. Thus there exists a positive integer $k$ such that $C$  is determined by the  equation
\begin{equation}\label{12-cycle-7}
e=c_{n}^{-1}s_{10}\cdots s_2s_1(1\ 2)=c_n^{i_k}(1\ 2)c_n^{i_{k-1}}(1\ 2)\cdots c_n^{i_3}(1\ 2)c_n^{i_2}(1\ 2)c_n^{i_1}(1\ 2),
\end{equation}
where $i_k\le -1$, $|i_l|\ge1$ for $l=1,2,\ldots,k-1$ and $|i_1|+|i_2|+\cdots+|i_{k-1}|-i_k+k=12$.
Clearly, we have $k\le 6$. If $k=1$, then (\ref{12-cycle-7}) is equivalent to $e=c_n^{-11}(1\ 2)$, which is impossible. Therefore, we have $2\leq k\leq 6$. Let  $u_0=(1\ 2)$ and $u_l=c_n^{-i_1-i_{2}-\cdots-i_{l-1}-i_l}(1\ 2)c_n^{i_1+i_{2}+\cdots+i_{l-1}+i_l}$ for $l=1,2,\ldots,k-1$. Then  $u_l=c_{n}^{-i_l}u_{l-1}c_{n}^{i_l}$ for $l=1,\ldots,k-1$, and so (\ref{12-cycle-7}) becomes
\begin{equation}\label{12-cycle-8}
e=c_n^{i_1+i_{2}+\cdots+i_{k-1}+i_k}u_{k-1}\cdots u_3u_2u_1(1\ 2),
\end{equation}
or equivalently,
\begin{equation}\label{12-cycle-9}
c_n^{-i_1-i_{2}-\cdots-i_{k-1}-i_k}=u_{k-1}\cdots u_3u_2u_1(1\ 2).
\end{equation}

Note that $i_1+i_{2}+\cdots+i_{k-1}+i_k\in[-\sum_{l=1}^k|i_l|,\sum_{l=1}^k|i_l|]\subseteq[-12,12]\subset[-n,n]$ because $\sum_{l=1}^k|i_l|\leq 12$ and $n\geq 13$. Therefore, if $i_1+i_{2}+\cdots+i_{k-1}+i_k\not=0$, then $i_1+i_{2}+\cdots+i_{k-1}+i_k\not\equiv0$ $(\mathrm{mod}~n)$, and so $|\mathrm{supp}(c_n^{-i_1-i_{2}-\cdots-i_{k-1}-i_k})|=|\mathrm{supp}(c_n^{i_1+i_{2}+\cdots+i_{k-1}+i_k})|=n\geq 13$. However, $|\mathrm{supp}(u_{k-1}\cdots u_3u_2u_1(1\ 2))|\leq 2(k-1)+2\leq 12$, contrary to (\ref{12-cycle-9}). Thus $i_1+i_{2}+\cdots+i_{k-1}+i_k=0$. From (\ref{12-cycle-8}) we have
\begin{equation}\label{12-cycle-10}
u_{k-1}\cdots u_3u_2u_1(1\ 2)=e.
\end{equation}
Clearly, $k=2$, $4$ or $6$ since $2\leq   k\leq 6$ and $e$ is an even permutation.

If $k=2$, then $|i_1|-i_2=10$ and $i_1+i_2=0$, implying that $i_1=5$ and $i_2=-5$.  Then $u_1=c_{n}^{-5}(1\ 2)c_{n}^{5}=(6\ 7)$, and so $u_1(1\ 2)=(6\ 7)(1\ 2)\neq e$,  contrary to (\ref{12-cycle-10}).

If $k=4$, then $|i_1|+|i_2|+|i_3|-i_4=8$ and $i_1+i_2+i_3+i_4=0$. It is easy to see that there are four types of solutions satisfying these conditions:
\begin{equation*}
\left\{\begin{array}{rl}
\mbox{I-type:}&i_4=-1, ~\{i_1,i_2,i_3\}=\{4,-2,-1\},~\{3,-3,1\}~\mbox{or}~\{-3,2,2\};\\
\mbox{II-type:}&i_4=-2, ~\{i_1,i_2,i_3\}=\{4,-1,-1\},\{3,-2,1\}~\mbox{or}~\{2,2,-2\};\\
\mbox{III-type:}&i_4=-3, ~\{i_1,i_2,i_3\}=\{3,1,-1\}~\mbox{or}~\{2,2,-1\};\\
\mbox{IV-type:}&i_4=-4, ~\{i_1,i_2,i_3\}=\{2,1,1\}.\\
\end{array}\right.
\end{equation*}

For I-type, since $i_4=-1$, we have $u_3=c_n^{-i_1-i_2-i_3}(1\ 2)c_n^{i_1+i_2+i_3}=c_n^{i_4}(1\ 2)c_n^{-i_4}=c_n^{-1}(1\ 2)c_n=(2\ 3)$. Since $u_3u_2u_1(1\ 2)=e$, we get $u_2u_1=(2\ 3)(1\ 2)=(1\ 2\ 3)$, which implies that $u_2=(1\ 2)$ and $u_1=(1\ 3)$, $u_2=(1\ 3)$ and $u_1=(2\ 3)$, or $u_2=(2\ 3)$ and $u_1=(1\ 2)$. The first two cases cannot occur because both $\mathrm{supp}(u_1)$ and $\mathrm{supp}(u_2)$ must contain two consecutive points.  The last case also cannot occur due to $i_1\neq 0$.

For II-type, as above, we have $u_3=(3\ 4)$ and $\{u_1,u_2\}=\{(1\ 2), (3\ 4)\}$. If $u_1=(1\ 2)$, then from $u_1=c_n^{-i_1}(1\ 2)c_n^{i_1}$  we get $i_1=0$, a contradiction. If $u_1=(3\ 4)$, then $u_2=(1\ 2)$.  This gives that $i_1=2$, $i_2=-2$ and $i_3=2$, and so (\ref{12-cycle-7}) becomes
\begin{equation}\label{12-cycle-11}
c_n^{-2}(1\ 2)c_n^{2}(1\ 2)c_n^{-2}(1\ 2)c_n^{2}(1\ 2)=e,
\end{equation}
which holds naturally because $c_n^{-2}(1\ 2)c_n^{2}(1\ 2)=(3\ 4)(1\ 2)$ is of order $2$. Thus there is a possible $12$-cycle in $\Gamma_n$ passing through $e$, $(1\ 2)$ and $c_{n}$, namely
\begin{equation}\label{12-cycle-12}
\begin{array}{lll}
C_1&=&(e,(1\ 2),c_n(1\ 2),c_n^{2}(1\ 2),(1\ 2)c_n^{2}(1\ 2)c_n,c_n^{-1}(1\ 2)c_n^{2}(1\ 2),c_n^{-2}(1\ 2)c_n^{2}(1\ 2),\\
&&(1\ 2)c_n^{-2}(1\ 2)c_n^{2}(1\ 2),c_n(1\ 2)c_n^{-2}(1\ 2)c_n^{2}(1\ 2),c_n^2(1\ 2)c_n^{-2}(1\ 2)c_n^{2}(1\ 2),\\
&&(1\ 2)c_n^2(1\ 2)c_n^{-2}(1\ 2)c_n^{2}(1\ 2)=c_n^2, c_n^{-1}(1\ 2)c_n^2(1\ 2)c_n^{-2}(1\ 2)c_n^{2}(1\ 2)=c_n,\\
&&c_n^{-2}(1\ 2)c_n^2(1\ 2)c_n^{-2}(1\ 2)c_n^{2}(1\ 2)=e).
\end{array}
\end{equation}
It is easy to verify that  $C_1$ is exactly a $12$-cycle.

For III-type, we have $u_3=(4\ 5)$ and $\{u_1,u_2\}=\{(1\ 2), (4\ 5)\}$. If $u_1=(1\ 2)$, then $i_1=0$, a contradiction. If $u_1=(4\ 5)$,  then $u_2=(1\ 2)$, and so $i_1=3$, $i_2=-3$ and $i_3=3$, which is impossible because $\{i_1,i_2,i_3\}=\{3,1,-1\}$ or $\{2,2,-1\}$.

For IV-type, we have $u_3=(5\ 6)$ and $\{u_1,u_2\}=\{(1\ 2), (5\ 6)\}$. If $u_1=(1\ 2)$, then $i_1=0$, a contradiction. If $u_1=(5\ 6)$,   then $i_1=4$, which is impossible.

If  $k=6$, then $|i_1|+|i_2|+|i_3|+|i_4|+|i_5|-i_6=6$ and $i_1+i_2+i_3+i_4+i_5+i_6=0$. Since $i_6\leq -1$ and $|i_l|\geq 1$ for $l=1,\ldots,5$, we have  $i_6=-1$ and $\{i_1,i_2,i_3,i_4,i_5\}=\{1,1,1,-1,-1\}$. Then, by simple computation, we see that the only solution of (\ref{12-cycle-7}) is $(i_1,i_2,i_3,i_4,i_5,i_6)=(1,-1,1,-1,1,-1)$, i.e.,
\begin{equation}\label{12-cycle-13}
c_n^{-1}(1\ 2)c_n(1\ 2)c_n^{-1}(1\ 2)c_n(1\ 2)c_n^{-1}(1\ 2)c_n(1\ 2)=e,
\end{equation}
which holds naturally because $c_n^{-1}(1\ 2)c_n(1\ 2)=(1\ 2\ 3)$ is of order $3$. Therefore, there exists another possible $12$-cycle in $\Gamma_n$ passing through $e$, $(1\ 2)$ and $c_{n}$, that is,
\begin{equation}\label{12-cycle-14}
\begin{array}{lll}
C_2&=&(e,(1\ 2),c_n(1\ 2),(1\ 2)c_n(1\ 2),c_n^{-1}(1\ 2)c_n(1\ 2),(1\ 2)c_n^{-1}(1\ 2)c_n(1\ 2),\\
&&c_n(1\ 2)c_n^{-1}(1\ 2)c_n(1\ 2),(1\ 2)c_n(1\ 2)c_n^{-1}(1\ 2)c_n(1\ 2),\\
&&c_n^{-1}(1\ 2)c_n(1\ 2)c_n^{-1}(1\ 2)c_n(1\ 2),(1\ 2)c_n^{-1}(1\ 2)c_n(1\ 2)c_n^{-1}(1\ 2)c_n(1\ 2),\\
&&c_n(1\ 2)c_n^{-1}(1\ 2)c_n(1\ 2)c_n^{-1}(1\ 2)c_n(1\ 2)=(1\ 2)c_n,\\
&&(1\ 2)c_n(1\ 2)c_n^{-1}(1\ 2)c_n(1\ 2)c_n^{-1}(1\ 2)c_n(1\ 2)=c_n,\\
&&c_n^{-1}(1\ 2)c_n(1\ 2)c_n^{-1}(1\ 2)c_n(1\ 2)c_n^{-1}(1\ 2)c_n(1\ 2)=e).
\end{array}
\end{equation}
It is easy to verify that  $C_2$ is exactly a $12$-cycle.

Summarizing the above discussions, we see that there are exactly two $12$-cycles in $\Gamma_n$ passing through $e$, $c_{n}$ and $c_{n}^{-1}$, namely the cycles $C_1$ and $C_2$ shown in (\ref{12-cycle-12}) and (\ref{12-cycle-14}), respectively.

Similarly, one can show that there are exactly two $12$-cycles in $\Gamma_n$ passing through $e$, $(1\ 2)$ and $c_n^{-1}$, namely the cycles
\begin{equation}\label{12-cycle-15}
\begin{array}{lll}
C_1^*&=&(e,(1\ 2),c_n^{-1}(1\ 2), c_n^{-2}(1\ 2),(1\ 2)c_n^{-2}(1\ 2)c_n^{-1},c_n(1\ 2)c_n^{-2}(1\ 2),\\
&&c_n^2(1\ 2)c_n^{-2}(1\ 2),
(1\ 2)c_n^{2}(1\ 2)c_n^{-2}(1\ 2),c_n^{-1}(1\ 2)c_n^{2}(1\ 2)c_n^{-2}(1\ 2),\\
&&c_n^{-2}(1\ 2)c_n^{2}(1\ 2)c_n^{-2}(1\ 2),
(1\ 2)c_n^{-2}(1\ 2)c_n^{2}(1\ 2)c_n^{-2}(1\ 2)=c_n^{-2}, \\
&&c_n(1\ 2)c_n^{-2}(1\ 2)c_n^{2}(1\ 2)c_n^{-2}(1\ 2)=c_n^{-1},c_n^{2}(1\ 2)c_n^{-2}(1\ 2)c_n^{2}(1\ 2)c_n^{-2}(1\ 2)=e)
\end{array}
\end{equation}
and
\begin{equation}\label{12-cycle-16}
\begin{array}{lll}
C_2^*&=&(e,(1\ 2),c_n^{-1}(1\ 2),(1\ 2)c_n^{-1}(1\ 2),c_n(1\ 2)c_n^{-1}(1\ 2),(1\ 2)c_n(1\ 2)c_n^{-1}(1\ 2),\\
&&c_n^{-1}(1\ 2)c_n(1\ 2)c_n^{-1}(1\ 2),(1\ 2)c_n^{-1}(1\ 2)c_n(1\ 2)c_n^{-1}(1\ 2),\\
&&c_n(1\ 2)c_n^{-1}(1\ 2)c_n(1\ 2)c_n^{-1}(1\ 2),(1\ 2)c_n(1\ 2)c_n^{-1}(1\ 2)c_n(1\ 2)c_n^{-1}(1\ 2),\\
&&c_n^{-1}(1\ 2)c_n(1\ 2)c_n^{-1}(1\ 2)c_n(1\ 2)c_n^{-1}(1\ 2)=(1\ 2)c_n^{-1},\\
&&(1\ 2)c_n^{-1}(1\ 2)c_n(1\ 2)c_n^{-1}(1\ 2)c_n(1\ 2)c_n^{-1}(1\ 2)=c_n^{-1},\\
&&c_n(1\ 2)c_n^{-1}(1\ 2)c_n(1\ 2)c_n^{-1}(1\ 2)c_n(1\ 2)c_n^{-1}(1\ 2)=e)
\end{array}
\end{equation}
which are determined by the equalities
\begin{equation}\label{12-cycle-17}
c_n^{2}(1\ 2)c_n^{-2}(1\ 2)c_n^{2}(1\ 2)c_n^{-2}(1\ 2)=e
\end{equation}
and
\begin{equation}\label{12-cycle-18}
c_n(1\ 2)c_n^{-1}(1\ 2)c_n(1\ 2)c_n^{-1}(1\ 2)c_n(1\ 2)c_n^{-1}(1\ 2)=e
\end{equation}
respectively.

We complete the proof.
\end{proof}

\begin{figure}[h]
\centering
\unitlength 1.7mm 
\linethickness{0.4pt}
\ifx\plotpoint\undefined\newsavebox{\plotpoint}\fi 
\begin{picture}(29,24)(0,0)
\put(15,18){\line(-3,2){6}}
\put(21,22){\line(-3,-2){6}}
\put(15,18){\line(0,-1){6}}
\put(15,12){\line(-3,-2){6}}
\put(9,8){\line(-1,0){6}}
\put(9,8){\line(0,-1){6}}
\put(15,12){\line(3,-2){6}}
\put(21,8){\line(0,-1){6}}
\put(21,8){\line(1,0){6}}
\put(15,18){\circle*{1.5}}
\put(15,12){\circle*{1.5}}
\put(21,8){\circle*{1.5}}
\put(9,8){\circle*{1.5}}
\put(3,8){\circle*{1.5}}
\put(9,2){\circle*{1.5}}
\put(21,2){\circle*{1.5}}
\put(27,8){\circle*{1.5}}
\put(9,22){\circle*{1.5}}
\put(21,22){\circle*{1.5}}
\put(17,12){\makebox(0,0)[cc]{\footnotesize$e$}}
\put(18,18){\makebox(0,0)[cc]{\footnotesize$(1\ 2)$}}
\put(9,24){\makebox(0,0)[cc]{\footnotesize$c_n(1\ 2)$}}
\put(21,24){\makebox(0,0)[cc]{\footnotesize$c_n^{-1}(1\ 2)$}}
\put(11,8){\makebox(0,0)[cc]{\footnotesize$c_n$}}
\put(18,8){\makebox(0,0)[cc]{\footnotesize$c_n^{-1}$}}
\put(29,8){\makebox(0,0)[cc]{\footnotesize$c_n^{-2}$}}
\put(1,8){\makebox(0,0)[cc]{\footnotesize$c_n^2$}}
\put(9,0){\makebox(0,0)[cc]{\footnotesize$(1\ 2)c_n$}}
\put(21,0){\makebox(0,0)[cc]{\footnotesize$(1\ 2)c_n^{-1}$}}
\end{picture}
\caption{\label{fig-1}\footnotesize{Local structure of  $\Gamma_n$.}}
\end{figure}
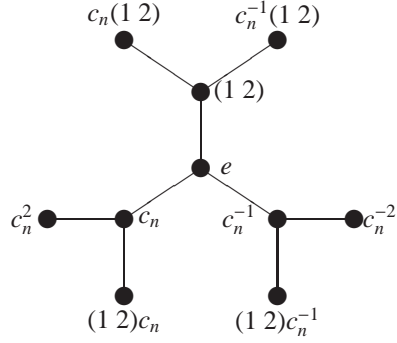

Combining  Lemmas \ref{lem-4} and \ref{lem-5}, we now prove that $\Gamma_n$ is a normal Cayley graph.
\begin{lem}\label{lem-6}
Let $S=\{c_n=(1\ 2\ldots n),c_n^{-1},(1\ 2)\}$ and $\Gamma_n=\mathrm{Cay}(S_n,S)$ ($n\geq 13$). Then $\mathrm{Aut}(\Gamma_n)_e=\mathrm{Aut}(S_n,S)$, or equivalently, $\Gamma_n$ is a normal Cayley graph.
\end{lem}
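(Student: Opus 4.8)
The plan is to invoke Lemma~\ref{lem-2}. Since $S$ is a symmetric generating set of $S_n$, it suffices to show that $(st)^\sigma=s^\sigma t^\sigma$ for every $\sigma\in\mathrm{Aut}(\Gamma_n)_e$ and all $s,t\in S$. Because $\Gamma_n$ is cubic, such a $\sigma$ permutes the neighbourhood $S=\{c_n,c_n^{-1},(1\ 2)\}$ of $e$; moreover the products $st$ with $s,t\in S$ are exactly $e$ together with the six vertices $c_n^{2},c_n^{-2},c_n(1\ 2),c_n^{-1}(1\ 2),(1\ 2)c_n,(1\ 2)c_n^{-1}$, which, together with $S$ and $e$ itself, make up the whole ball of radius $2$ about $e$ (see Figure~\ref{fig-1}). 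So the task reduces to determining the action of $\sigma$ on this ball and then checking the identities $(st)^\sigma=s^\sigma t^\sigma$ ($s,t\in S$) by inspection.

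First I would pin down how $\sigma$ permutes $S$. A graph automorphism carries $12$-cycles to $12$-cycles, so for any triple $T$ of vertices it maps the $12$-cycles through $T$ bijectively onto those through $T^\sigma$; in particular these two numbers agree. By Lemma~\ref{lem-4} the triple $\{e,c_n,c_n^{-1}\}$ lies on a unique $12$-cycle, whereas by Lemma~\ref{lem-5} each of $\{e,(1\ 2),c_n\}$ and $\{e,(1\ 2),c_n^{-1}\}$ lies on exactly two. As $\sigma$ fixes $e$ and permutes $\{c_n,c_n^{-1},(1\ 2)\}$, it must carry $\{e,c_n,c_n^{-1}\}$ to a triple lying on a unique $12$-cycle; hence $\{c_n,c_n^{-1}\}^\sigma=\{c_n,c_n^{-1}\}$ and $(1\ 2)^\sigma=(1\ 2)$. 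Thus either (Case~1) $\sigma$ fixes each of $c_n,c_n^{-1},(1\ 2)$, or (Case~2) $\sigma$ fixes $(1\ 2)$ and interchanges $c_n$ with $c_n^{-1}$.

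Next I would use the explicit cycles of Lemmas~\ref{lem-4} and~\ref{lem-5}. In both cases $\sigma$ fixes the set $\{e,c_n,c_n^{-1}\}$, so $\sigma$ maps the unique $12$-cycle $C$ of~\eqref{12-cycle-6} onto itself. From~\eqref{12-cycle-6}, the two neighbours of $c_n$ inside $C$ are $e$ and $(1\ 2)c_n$, and those of $c_n^{-1}$ inside $C$ are $e$ and $(1\ 2)c_n^{-1}$; hence $\sigma$ is determined on $(1\ 2)c_n$ and $(1\ 2)c_n^{-1}$, and then, since the $\Gamma_n$-neighbours of $c_n$ are $\{e,c_n^{2},(1\ 2)c_n\}$ and those of $c_n^{-1}$ are $\{e,c_n^{-2},(1\ 2)c_n^{-1}\}$, also on $c_n^{2}$ and $c_n^{-2}$. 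For the remaining vertices $c_n(1\ 2)$ and $c_n^{-1}(1\ 2)$, I would observe that among the two $12$-cycles $C_1,C_2$ of~\eqref{12-cycle-12} and~\eqref{12-cycle-14} through $\{e,(1\ 2),c_n\}$, the edge $\{c_n,c_n^{2}\}$ lies on exactly one (say $C_1$) and the edge $\{c_n,(1\ 2)c_n\}$ on the other; since $\sigma$ is already known on $c_n$ and on $c_n^{2}$, it therefore fixes or interchanges $C_1,C_2$ in a determined way, and reading off the two neighbours of $(1\ 2)$ inside $C_1$, namely $e$ and $c_n(1\ 2)$, fixes $\sigma$ on $c_n(1\ 2)$; the starred cycles $C_1^*,C_2^*$ of~\eqref{12-cycle-15} and~\eqref{12-cycle-16} handle $c_n^{-1}(1\ 2)$ in the same way. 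Working this out gives: in Case~1, $\sigma$ is the identity on the radius-$2$ ball; in Case~2, $\sigma$ restricts on that ball to $e\mapsto e$, $(1\ 2)\mapsto(1\ 2)$, $c_n^{\pm1}\mapsto c_n^{\mp1}$, $c_n^{\pm2}\mapsto c_n^{\mp2}$, $c_n^{\pm1}(1\ 2)\mapsto c_n^{\mp1}(1\ 2)$, $(1\ 2)c_n^{\pm1}\mapsto(1\ 2)c_n^{\mp1}$, which is precisely the restriction of $\mathrm{Inn}(\phi)\in\mathrm{Aut}(S_n,S)$ to the ball. In either case the identities $(st)^\sigma=s^\sigma t^\sigma$ ($s,t\in S$) are now immediate, so $\mathrm{Aut}(\Gamma_n)_e=\mathrm{Aut}(S_n,S)$ by Lemma~\ref{lem-2}; equivalently, $\Gamma_n$ is a normal Cayley graph by Lemma~\ref{lem-1}.

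The conceptual skeleton above is short; the real work, and the main obstacle, is the combinatorial bookkeeping in the third step. One must extract correctly, from the long permutation words in~\eqref{12-cycle-6}, \eqref{12-cycle-12}, \eqref{12-cycle-14}, \eqref{12-cycle-15} and~\eqref{12-cycle-16}, which edges at $c_n$, $c_n^{-1}$ and $(1\ 2)$ each of these five cycles uses, and then verify that these incidence patterns are asymmetric enough to resolve every ambiguity in the action of $\sigma$ on the ball --- in particular the potential swap $c_n^{2}\leftrightarrow(1\ 2)c_n$ and its analogues, which the purely local picture of Figure~\ref{fig-1} alone cannot rule out. Once that is in hand, everything else (the reduction via Lemma~\ref{lem-2}, the counting argument fixing $(1\ 2)$, and the final verification of the product identities) is routine.
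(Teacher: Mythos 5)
Your proposal is correct and follows essentially the same route as the paper: fix $(1\ 2)$ by comparing the counts of $12$-cycles through $\{e,c_n,c_n^{-1}\}$ versus $\{e,(1\ 2),c_n^{\pm 1}\}$ (Lemmas \ref{lem-4} and \ref{lem-5}), then use the explicit cycles \eqref{12-cycle-6}, \eqref{12-cycle-12}, \eqref{12-cycle-14}, \eqref{12-cycle-15}, \eqref{12-cycle-16} together with the neighbourhoods of $c_n,c_n^{-1}$ to pin down $\sigma$ on the radius-$2$ ball and verify $(st)^\sigma=s^\sigma t^\sigma$ via Lemmas \ref{lem-2} and \ref{lem-1}. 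The only (harmless) deviation is in determining $(c_n(1\ 2))^\sigma$: you distinguish $C_1$ from $C_2$ by which edge at $c_n$ they use, whereas the paper simply notes that both cycles have $c_n(1\ 2)$ as the cycle-neighbour of $(1\ 2)$ other than $e$, which settles it directly.
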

\begin{proof}
Let $\sigma\in\mathrm{Aut}(\Gamma_n)_e$. Firstly, we claim that $(1\ 2)^\sigma=(1\ 2)$. In fact, if $(1\ 2)^\sigma\neq(1\ 2)$, without loss of generality, we assume that $(1\ 2)^\sigma=c_n$. Then $c_n^{\sigma}=(1\ 2)$ and $(c_n^{-1})^{\sigma}=c_n^{-1}$ or $c_n^{\sigma}=c_n^{-1}$ and $(c_n^{-1})^{\sigma}=(1\ 2)$ because $\sigma\in\mathrm{Aut}(\Gamma_n)_e$ fixes $S$ setwise. Since $\sigma$ sents $12$-cycles to $12$-cycles, by Lemmas \ref{lem-4} and \ref{lem-5}, both of the two cases cannot occur because in $\Gamma_n$ there are only one $12$-cycle  passing through $e$, $c_n$ and $c_n^{-1}$ while there are two $12$-cycles passing through $e$, $(1\ 2)$ and $c_n$ (resp. $e$, $(1\ 2)$ and $c_n^{-1}$).

To prove our result, by Lemmas \ref{lem-1} and \ref{lem-2}, it suffices to show that $(st)^\sigma=s^\sigma t^\sigma$ for any $s,t\in S$. As $(1\ 2)^\sigma=(1\ 2)$, we just need to consider the following two cases.

\textbf{Case 1.} $c_n^\sigma=c_n$ and $(c_n^{-1})^\sigma=c_n^{-1}$;

By Lemma \ref{lem-4}, there is an unique $12$-cycle in $\Gamma_n$ passing through $e$, $c_n$ and $c_n^{-1}$, namely the cycle
$C=(e,c_n,(1\ 2)c_n,\ldots,(1\ 2)c_n^{-1},c_n^{-1},e)$ shown in (\ref{12-cycle-6}). Then $C^\sigma=(e^\sigma,c_n^\sigma,((1\ 2)c_n)^\sigma,$ $\ldots,((1\ 2)c_n^{-1})^\sigma,(c_n^{-1})^\sigma,e^\sigma)=(e,c_n,((1\ 2)c_n)^\sigma,\ldots,((1\ 2)c_n^{-1})^\sigma,c_n^{-1},e)$ is also a $12$-cycle passing through $e$, $c_n$ and $c_n^{-1}$. By the uniqueness of the $12$-cycle, we obtain
\begin{equation}\label{normal-1}
((1\ 2)c_n)^\sigma=(1\ 2)c_n=(1\ 2)^\sigma c_n^\sigma~\mbox{and}~((1\ 2)c_n^{-1})^\sigma=(1\ 2)c_n^{-1}=(1\ 2)^\sigma (c_n^{-1})^\sigma.
\end{equation}
Furthermore,  $\sigma$ fixes $c_n$ and $c_n^{-1}$, and so fixes their neighborhoods $N_{\Gamma_n}(c_n)=\{e, (1\ 2)c_n, c_n^2\}$ and $N_{\Gamma_n}(c_n^{-1})=\{e, (1\ 2)c_n^{-1}, c_n^{-2}\}$ setwise (see Fig. \ref{fig-1}), respectively. Then, by (\ref{normal-1}), we get
\begin{equation}\label{normal-2}
(c_n^2)^\sigma=c_n^2=c_n^\sigma c_n^\sigma~\mbox{and}~(c_n^{-2})^\sigma=c_n^{-2}=(c_n^{-1})^\sigma (c_n^{-1})^\sigma.
\end{equation}
Moreover, by Lemma \ref{lem-5}, there are exactly two $12$-cycles passing through $e$, $(1\ 2)$ and $c_n$, namely  $C_1=(e, (1\ 2), c_n(1\ 2), \ldots, c_n^2, c_n, e)$ and $C_2=(e, (1\ 2), c_n(1\ 2), \ldots, (1\ 2)c_n, c_n, e)$ shown in (\ref{12-cycle-12}) and (\ref{12-cycle-14}), respectively. Note that both $C_1$ and $C_2$ pass through $c_n(1\ 2)$. As $\sigma$ fixes $e$, $(1\ 2)$ and $c_n$, and sents $12$-cycles to $12$-cycles, we have
\begin{equation}\label{normal-3}
(c_n(1\ 2))^\sigma=c_n(1\ 2)=c_n^\sigma(1\ 2)^\sigma.
\end{equation}
Similarly, by considering the $12$-cycles $C_1^{*}$ and $C_2^{*}$ (see (\ref{12-cycle-15}) and (\ref{12-cycle-16})) passing through $e$, $(1\ 2)$ and $c_n^{-1}$, we get
\begin{equation}\label{normal-4}
(c_n^{-1}(1\ 2))^\sigma=c_n^{-1}(1\ 2)=(c_n^{-1})^\sigma(1\ 2)^\sigma.
\end{equation}
Also, it is obvious that
\begin{equation}\label{normal-5}
((1\ 2)^2)^\sigma=e^\sigma=e=(1\ 2)(1\ 2)=(1\ 2)^\sigma(1\ 2)^\sigma.
\end{equation}
Combining (\ref{normal-1})--(\ref{normal-5}), we obtain the  result as required.

\textbf{Case 2.} $c_n^\sigma=c_n^{-1}$ and $(c_n^{-1})^\sigma=c_n$.

Since $\sigma$ swaps $c_n$ and $c_n^{-1}$, as in Case 1, by considering the unique $12$-cycle in $\Gamma_n$ passing through $e$, $c_n$ and $c_n^{-1}$ we get
\begin{align}
&((1\ 2)c_n)^\sigma=(1\ 2)c_n^{-1}=(1\ 2)^\sigma c_n^\sigma,~((1\ 2)c_n^{-1})^\sigma=(1\ 2)c_n=(1\ 2)^\sigma (c_n^{-1})^\sigma; \label{normal-6}\\
&(c_n^2)^\sigma=c_n^{-2}=c_n^\sigma c_n^\sigma,~(c_n^{-2})^\sigma=c_n^{2}=(c_n^{-1})^\sigma (c_n^{-1})^\sigma.\label{normal-7}
\end{align}
Also, by considering the two $12$-cycles $C_1$ and $C_2$ (resp. $C_1^*$ and $C_2^*$) passing through $e$, $(1\ 2)$ and $c_n$ (resp. $e$, $(1\ 2)$ and $c_n^{-1}$), we obtain
\begin{equation}\label{normal-8}
(c_n(1\ 2))^\sigma=c_n^{-1}(1\ 2)=c_n^\sigma(1\ 2)^\sigma~\mbox{and}~(c_n^{-1}(1\ 2))^\sigma=c_n(1\ 2)=(c_n^{-1})^\sigma(1\ 2)^\sigma.
\end{equation}
Combining (\ref{normal-5})--(\ref{normal-8}), we obtain the  result as required.

The proof is now complete.
\end{proof}

\begin{remark}\label{rem-2}
\emph{From the proof of Lemma \ref{lem-6} we see that each $\sigma\in\mathrm{Aut}(\Gamma_n)_e$ $(n\geq 13)$ must fix $(1\ 2)\in S$. Thus $\mathrm{Aut}(\Gamma_n)_e$ is not transitive on the neighborhood of the identity vertex $e$, which implies that $\Gamma_n$ is not arc-transitive.}
\end{remark}

By Lemma \ref{lem-6}, $\Gamma_n$ ($n\geq 13$) is a normal Cayley graph, so the full automorphism group of $\Gamma_n$ is equal to $\mathrm{Aut}(\Gamma_n)=R(S_n)\rtimes \mathrm{Aut}(S_n,S)$ by Godsil \cite{Godsil}. Thus, in order to determine $\mathrm{Aut}(\Gamma_n)$, it suffices to determine the group $\mathrm{Aut}(S_n,S)$.  The following lemma completely determine the group $\mathrm{Aut}(S_n,S)$.

\begin{lem}\label{lem-7}
Let $S_n$ be the symmetric group of degree $n$, and let $S=\{c_n=(1\ 2\ldots\ n),c_n^{-1},$ $(1\ 2)\}$ ($n\geq 3$). Then $$\mathrm{Aut}(S_n,S)=\langle\mathrm{Inn}(\phi)\rangle,$$
where $\phi=(1\ 2)(3\ n)(4\ n-1)(5\ n-2)\cdots$ $(\in S_n)$, and $\mathrm{Inn}(\phi)$ denotes the inner isomorphism of $S_n$ induced by $\phi$.
\end{lem}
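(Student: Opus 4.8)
The plan is to first show that every element of $\mathrm{Aut}(S_n,S)$ is inner, and then to determine explicitly which inner automorphisms fix $S$ setwise. First I would observe that for all $n\geq 3$ we have $\mathrm{Aut}(S_n,S)\subseteq\mathrm{Inn}(S_n)$: an automorphism $\sigma$ with $S^\sigma=S$ permutes $S$, and since $(1\ 2)$ has order $2$ while $c_n,c_n^{-1}$ have order $n\geq 3$, it must fix $(1\ 2)$ and either fix or interchange $c_n$ and $c_n^{-1}$; in particular $(1\ 2)^\sigma$ is again a transposition, so by Lemma~\ref{lem-3} $\sigma$ is not an outer automorphism even when $n=6$. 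Writing $\sigma=\mathrm{Inn}(g)$ (with the convention $x^\sigma=g^{-1}xg$), the element $g$ is uniquely determined by $\sigma$ because $Z(S_n)=\{e\}$ for $n\geq 3$, so it suffices to find all admissible $g$.

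Next I would record the constraints on $g$. From $(1\ 2)^\sigma=(1\ 2)$ we get $g\in C_{S_n}((1\ 2))$, i.e.\ $\{1^g,2^g\}=\{1,2\}$. If $c_n^\sigma=c_n$, then $g\in C_{S_n}(c_n)=\langle c_n\rangle$ (the centraliser of an $n$-cycle in $S_n$ is the cyclic group it generates); but $c_n^{k}$ conjugates $(1\ 2)$ to $(1+k\ 2+k)$ with indices modulo $n$, which equals $(1\ 2)$ only for $k\equiv0$ when $n\geq 3$, so $g=e$ and $\sigma=\mathrm{id}$. If $c_n^\sigma=c_n^{-1}$, then $g$ runs over a coset of $\langle c_n\rangle$, and a direct check shows the solutions of $g^{-1}c_ng=c_n^{-1}$ are exactly the $n$ reversals $\rho_c\colon m\mapsto c-m\pmod n$, $c\in\mathbb{Z}_n$ (so that $\langle c_n,\rho_0\rangle$ is dihedral of order $2n$). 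Imposing $\{1^{\rho_c},2^{\rho_c}\}=\{c-1,c-2\}=\{1,2\}$ modulo $n$ forces $c\equiv3$ (the other possibility would require $n\mid2$), and one checks that $\rho_3=(1\ 2)(3\ n)(4\ n-1)(5\ n-2)\cdots=\phi$.

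Finally I would verify that $\mathrm{Inn}(\phi)$ does belong to $\mathrm{Aut}(S_n,S)$, which follows at once from $\phi^{-1}c_n\phi=c_n^{-1}$, $\phi^{-1}c_n^{-1}\phi=c_n$ and $\phi^{-1}(1\ 2)\phi=(1\ 2)$, and conclude $\mathrm{Aut}(S_n,S)=\{\mathrm{id},\mathrm{Inn}(\phi)\}$; since $\phi^2=e$ and $\phi\notin Z(S_n)$ the automorphism $\mathrm{Inn}(\phi)$ has order $2$, hence $\mathrm{Aut}(S_n,S)=\langle\mathrm{Inn}(\phi)\rangle$. The only genuinely delicate bookkeeping is the explicit description of all solutions of $g^{-1}c_ng=c_n^{-1}$ in the second case and keeping the conjugation convention consistent throughout; I do not expect any real obstacle, and in particular the exceptional outer automorphisms of $S_6$ cause no trouble since they carry transpositions to products of three disjoint transpositions, none of which lie in $S$.
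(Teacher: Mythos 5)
Your proposal is correct and follows essentially the same route as the paper: use Lemma~\ref{lem-3} (noting that $(1\ 2)^\sigma$ must be a transposition, so the exceptional outer automorphisms of $S_6$ are excluded) to see that every element of $\mathrm{Aut}(S_n,S)$ is inner, and then explicitly determine which conjugating permutations preserve $S$. The only difference is cosmetic: the paper reads the conjugator off directly from the equation $\phi^{-1}S\phi=S$ (first forcing $(\phi(1)\ \phi(2))=(1\ 2)$, then using the image of the $n$-cycle), whereas you organize the same computation via $C_{S_n}(c_n)=\langle c_n\rangle$ and the reversals solving $g^{-1}c_ng=c_n^{-1}$, which is an equally valid bookkeeping of the identical idea.
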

\begin{proof}
Let $\varphi\in \mathrm{Aut}(S_n,S)$. Then $\varphi\in \mathrm{Aut}(S_n)$ and $S^\varphi=S$. Since  $\varphi$ cannot sent $(1\ 2)\in S$ to a product of three disjoint transpositions (we only need to consider this situation when $n=6$),  from Lemma \ref{lem-3} we know that $\varphi\in \mathrm{Inn}(S_n)$, and so there exists some $\phi\in S_n$ such that $\varphi=\mathrm{Inn}(\phi)$. Thus $S^\varphi=\phi^{-1}S\phi=S$, that is,
\begin{equation}\label{Aut-1}
\begin{array}{rl}
&\{\phi^{-1}(1\ 2\ 3\ \ldots\ n)\phi,\phi^{-1}(1\ n\ n-1\ \ldots\ 2)\phi,\phi^{-1}(1\ 2)\phi\}\\
=&\{(\phi(1)\ \phi(2)\ \phi(3)\ \ldots\ \phi(n)),(\phi(1)\ \phi(n)\ \phi(n-1)\ \ldots\ \phi(2)),(\phi(1)\ \phi(2))\}\\
=&\{(1\ 2\ 3\ \ldots\ n),(1\ n\ n-1\ \ldots\ 2),(1\ 2)\}.
\end{array}
\end{equation}
According to (\ref{Aut-1}), we have $(\phi(1)\ \phi(2))=(1\ 2)$. Therefore,  $\phi(1)=1$ and $\phi(2)=2$ or $\phi(1)=2$ and $\phi(2)=1$. Again by (\ref{Aut-1}), the former case implies that  $\phi=e$ while the later case implies that $\phi=(1\ 2)(3\ n)(4\ n-1)(5\ n-2)\cdots$. It follows our result.
\end{proof}

By Lemmas \ref{lem-6} and \ref{lem-7}, we obtain the main result of this paper immediately.

\begin{thm}\label{thm-main}
Let $S=\{c_n=(1\ 2\ldots n),c_n^{-1},(1\ 2)\}$ and $\Gamma_n=\mathrm{Cay}(S_n,S)$ ($n\geq 13$). Then $$\mathrm{Aut}(\Gamma_n)=R(S_n)\rtimes \mathrm{Aut}(S_n,S)=R(S_n)\rtimes \langle\mathrm{Inn}(\phi)\rangle\cong S_n\rtimes \mathbb{Z}_2,$$
where $R(S_n)$ is the right regular representation of $S_n$, $\phi=(1\ 2)(3\ n)(4\ n-1)(5\ n-2)\cdots$ $(\in S_n)$, and $\mathrm{Inn}(\phi)$ is the inner isomorphism of $S_n$ induced by $\phi$.
\end{thm}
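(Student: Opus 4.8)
The plan is to obtain Theorem~\ref{thm-main} by assembling the pieces already in place. First I would recall Godsil's structural result \cite{Godsil}: for any finite group $G$ and any symmetric generating set $S$ with $e\notin S$, one has $N_{\mathrm{Aut}(\mathrm{Cay}(G,S))}(R(G))=R(G)\rtimes\mathrm{Aut}(G,S)$, so that $\mathrm{Cay}(G,S)$ is normal precisely when $\mathrm{Aut}(\mathrm{Cay}(G,S))=R(G)\rtimes\mathrm{Aut}(G,S)$. Applying this with $G=S_n$ and the specific $S$ of the statement, and invoking Lemma~\ref{lem-6} (which asserts that $\Gamma_n$ is normal for $n\ge 13$), I immediately get $\mathrm{Aut}(\Gamma_n)=R(S_n)\rtimes\mathrm{Aut}(S_n,S)$. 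This reduces the whole problem to computing the point stabilizer $\mathrm{Aut}(S_n,S)$ inside $\mathrm{Aut}(S_n)$.

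Second, I would quote Lemma~\ref{lem-7}, which identifies $\mathrm{Aut}(S_n,S)=\langle\mathrm{Inn}(\phi)\rangle$ with $\phi=(1\ 2)(3\ n)(4\ n-1)(5\ n-2)\cdots$. To pin down the isomorphism type I only need to check that this group has order $2$: since $\phi$ is a product of disjoint transpositions, $\phi^2=e$, and $\phi\ne e$, so $\mathrm{Inn}(\phi)$ is a nontrivial involution in $\mathrm{Aut}(S_n)$ (nontrivial because the centre of $S_n$ is trivial for $n\ge 3$, so $\mathrm{Inn}$ is faithful). Hence $\langle\mathrm{Inn}(\phi)\rangle\cong\mathbb{Z}_2$, and with $R(S_n)\cong S_n$ as the normal factor we conclude $\mathrm{Aut}(\Gamma_n)=R(S_n)\rtimes\langle\mathrm{Inn}(\phi)\rangle\cong S_n\rtimes\mathbb{Z}_2$.

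Then I should indicate where the genuine work sits, since the proof of the theorem proper is just the two-step combination above. Lemma~\ref{lem-7} is essentially elementary: once Lemma~\ref{lem-3} rules out the exceptional outer automorphism at $n=6$ (it would have to send the transposition $(1\ 2)$ to a product of three transpositions, impossible because $S^{\varphi}=S$ and $S$ contains only one transposition), every $\varphi\in\mathrm{Aut}(S_n,S)$ is inner, and the condition $\phi^{-1}S\phi=S$ forces $(\phi(1)\ \phi(2))=(1\ 2)$ and then pins $\phi$ down to one of two permutations. The real obstacle is Lemma~\ref{lem-6}, i.e. establishing normality, which in turn rests on the delicate $12$-cycle count of Lemmas~\ref{lem-4} and~\ref{lem-5}: one shows that exactly one $12$-cycle of $\Gamma_n$ passes through $\{e,c_n,c_n^{-1}\}$ whereas two pass through $\{e,(1\ 2),c_n^{\pm 1}\}$, forcing any $\sigma\in\mathrm{Aut}(\Gamma_n)_e$ to fix $(1\ 2)$; tracking how $\sigma$ permutes these distinguished cycles and the local neighbourhoods in Fig.~\ref{fig-1} then yields $(st)^\sigma=s^\sigma t^\sigma$ for all $s,t\in S$, and Lemmas~\ref{lem-1}--\ref{lem-2} convert this into normality. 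Since all of this is already available, nothing further is needed for Theorem~\ref{thm-main} beyond citing Lemmas~\ref{lem-6} and~\ref{lem-7}.
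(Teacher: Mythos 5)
Your proposal is correct and matches the paper's own proof, which likewise derives the theorem immediately by combining Lemma~\ref{lem-6} (normality, hence $\mathrm{Aut}(\Gamma_n)=R(S_n)\rtimes\mathrm{Aut}(S_n,S)$ via Godsil) with Lemma~\ref{lem-7} (computation of $\mathrm{Aut}(S_n,S)=\langle\mathrm{Inn}(\phi)\rangle$). Your additional check that $\mathrm{Inn}(\phi)$ is a nontrivial involution, giving the $\mathbb{Z}_2$ factor, is a small point the paper leaves implicit but is entirely consistent with its argument.
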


\begin{remark}\label{rem-3}
\emph{It is worth mentioning  that the conclusion of Theorem \ref{thm-main} also holds for $\mathrm{Cay}(S_n,S)$ with $S=\{c_n,c_n^{-1},(i\ i+1)\}$.}
\end{remark}

\begin{remark}\label{rem-4}
\emph{Noting that Theorem \ref{thm-main} gives the automorphism group of $\Gamma_n$ for $n\geq 13$. For $n\leq 12$, by using  the package ``grape'' of GAP4 \cite{GAP4}, we obtain that $\mathrm{Aut}(\Gamma_n)\cong D_{6}$ if $n=3$ and $\mathrm{Aut}(\Gamma_n)\cong S_n\times \mathbb{Z}_2$ if $4\leq n\leq 8$; however, to determine the automorphism group of $\Gamma_n$ for $9\leq n\leq 12$ is beyond the capacity of our computer.}
\end{remark}

\end{document}